\numberwithin{equation}{section}
\theoremstyle{plain}
\newtheorem{theorem}[equation]{Theorem}
\newtheorem{corollary}[equation]{Corollary}
\newtheorem{conjecture}[equation]{Conjecture}
\newtheorem{proposition}[equation]{Proposition}
\theoremstyle{definition}
\newtheorem{remark}[equation]{Remark}
\newcommand{\Z}{\mathbb{Z}}
\newcommand{\N}{\mathbb{N}}
\newcommand{\Q}{\mathbb{Q}}
\newcommand{\G}{\mathcal{G}}
\title[On conjectures and problems of Ruzsa]{On conjectures and problems of Ruzsa concerning difference graphs of $S$-units}
\author{Ante \'Custi\'c}
\address{
Institut f\"ur Optimierung und Diskrete Mathematik (Math B)\\
Technische Universit\"at Graz\\ Steyrergasse 30/II\\
8010 Graz, Austria
}
\email{custic@math.tugraz.at}
\author{Lajos Hajdu}
\address{
Institute of Mathematics, University of Debrecen\\
P.O.\@ Box 12, H-4010 Debrecen, Hungary
}
\email{hajdul@science.unideb.hu}
\author{Dijana Kreso}
\address{
Institut f\"ur Analysis und Computational Number Theory (Math A)\\
Technische Universit\"at Graz\\ Steyrergasse 30/II\\
8010 Graz, Austria
}
\email{kreso@math.tugraz.at}
\author{Robert Tijdeman}
\address{
Mathematical Institute, Leiden University, 2300 RA Leiden, P.O.\@
Box 9512, The Netherlands
}
\email{tijdeman@math.leidenuniv.nl}
\date{}
\begin{document}

\begin{abstract}

Given a finite nonempty set of primes $S$, we build a graph $\mathcal{G}$ with vertex set $\Q$ by connecting $x, y\in \Q$ if the prime divisors of both the numerator and denominator of $x-y$ are from $S$. In this paper we resolve two conjectures posed by Ruzsa concerning the possible sizes of induced nondegenerate cycles of $\mathcal{G}$, and also a problem of Ruzsa concerning the existence of subgraphs of $\G$ which are not induced subgraphs.
\end{abstract}

\keywords{$S$-unit equation, $S$-unit graph, induced graph, nondegenerate cycle}

\maketitle


\section{Introduction and main results}
Let $S$ be a finite nonempty set of primes, $\Z_S$ be the ring of those rationals whose denominators (when written in lowest terms) are not divisible by primes outside $S$, and $\Z_S^*$ the multiplicative group of invertible elements ($S$-units) in $\Z_S$.
We build a graph $\mathcal{G}$ with vertex set $\Q$ by connecting $x, y\in \Q$ if $x-y\in \Z_S^*$.  Then $\mathcal{G}$ is said to be the {\it $S$-unit graph}. Graphs of this type were introduced by Gy\H{o}ry~\cite{G82}, and have been subsequently intensively studied and applied to various Diophantine problems, see \cite{G08} and \cite{ght1}, and references therein. 

With the aim of understanding the structure of the $S$-unit graph $\mathcal{G}$, Ruzsa~\cite{R11} studied its possible subgraphs. We say that distinct $a_1, a_2,\ldots, a_n\in \Q$, $n\geq 3$, form an {\it induced cycle} $a_1\to a_2\to \cdots \to a_n\to a_1$ of $\mathcal{G}$ if $a_j-a_i$ is an $S$-unit if and only if either $j\in \{i+1, i-1\}$ or $\{i, j\}=\{1, n\}$. Given any cycle $a_1\to a_2\to \cdots \to a_n\to a_1$ of $\mathcal{G}$, let $u_i=a_{i+1}-a_i$ for $i=1, 2, \ldots, n-1$, and $u_n=a_1-a_n$. Then clearly $u_1+u_2+\cdots+u_n=0$. Note that the cycle $a_1\to a_2\to \cdots \to a_n\to a_1$ is an induced cycle of $\mathcal{G}$ if and only if
\[
u_{i}+u_{i+1}+\cdots+u_{j}\notin \Z_S^* \ \textnormal{for all}\  1\leq i<j\leq n, \  (i, j)\neq (1,n-1), (2, n).
\]
In \cite{R11}, Ruzsa proved that if $2\in S$, then there exist induced cycles of $\mathcal{G}$ of every length, and if $2\notin S$, then there exist induced cycles of $\mathcal{G}$ of every even length and none of odd length.

Ruzsa further studied nondegenerate induced cycles. A cycle $a_1\to a_2\to \cdots \to a_n\to a_1$ of $\mathcal{G}$ is said to be {\it degenerate} if there exists a proper zero subsum of $u_1+u_2+\cdots+u_n$, i.e.\@ if there exist $l, i_1, i_2, \ldots, i_l\in \N$ satisfying $0<l<n$ and $1\leq i_1<\cdots<i_l\leq n$ such that $u_{i_1}+u_{i_2}+\cdots+u_{i_l}=0$.
If no such proper zero subsum exists, then the cycle is said to be {\it nondegenerate}. Ruzsa proved that if $2\in S$, then there are nondegenerate induced cycles of $\mathcal{G}$ of every length, and if $2\notin S$, but $3\in S$, then there are nondegenerate induced cycles of $\mathcal{G}$ of every even length and none of odd length. Furthermore, Ruzsa posed the following conjecture (which is Conjecture 3.4~in~\cite{R11}).

\begin{conjecture}\label{conj1}
Let $S$ be a finite nonempty set of primes. Then there are nondegenerate induced cycles of the $S$-unit graph $\mathcal{G}$ of every sufficiently large even length.
\end{conjecture}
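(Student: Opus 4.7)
By the results of Ruzsa recalled in the introduction, the conjecture has already been established when $2\in S$ or $3\in S$, so we may assume $2,3\notin S$; set $p:=\min S\ge 5$. Since every $S$-unit is then odd, the sum of an odd number of them has $2$-adic valuation zero and cannot vanish, so every cycle of $\G$ has even length, and it suffices to construct nondegenerate induced cycles of every sufficiently large even length. Via the shift and $S$-unit scaling automorphisms of $\G$, this reduces to finding a cyclic integer sequence $(u_1,\dots,u_n)$ with each $u_i=\pm p^{k_i}$, summing to zero, whose proper subset sums avoid zero, and whose consecutive subsums (other than the two excluded ones) avoid $\pm p^{\Z}$.

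The key preliminary remark is that $+p^k$ and $-p^k$ cannot both appear, else the pair is a proper zero subsum; so the multiset of $u_i$'s splits into a positive part and a disjoint negative part of $p$-power values. The construction I propose uses blocks
\[
B_{r,\mathbf{c}}=\bigl((+1)^{c_0},(+p)^{c_1},\dots,(+p^{r-1})^{c_{r-1}},-p^{r}\bigr),\qquad \sum_{j<r} c_j\,p^{j}=p^{r},
\]
with $(c_0,\dots,c_{r-1})$ chosen so that this representation of $p^r$ with digit bounds $c_j'\le c_j$ is the \emph{unique} solution; this secures nondegeneracy of the block. A single such block has length $1+\sum c_j\equiv 2\pmod{p-1}$, so only one even residue class modulo $p-1$ is realised by varying $r$ and $\mathbf{c}$. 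To cover the remaining even residue classes we use multi-block constructions with two large negatives $-p^{r},-p^{s}$ ($r\neq s$), choosing the positive multiplicities so that no proper subset of the positives sums to $p^r$ or $p^s$; a Sylvester--Frobenius argument on the resulting numerical semigroup then delivers every sufficiently large even $n$ as the length of some such construction.

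The induced property is enforced by a placement procedure: the entries within each block are arranged so that no consecutive subsum equals $\pm p^{k}$ for any $k$. Since partial sums lie in a controlled range and only $O(\log_{p} n)$ forbidden powers are in play, a systematic interleaving---splitting the $+1$'s into groups of size strictly less than $p$ between successive higher entries---provably succeeds, and is verified by a direct $p$-adic computation. The main obstacle is the interplay between the two requirements: nondegeneracy forces the multiplicities to be tightly controlled to preserve uniqueness of the base-$p$ representation, while the induced property forces them to be spread out to defeat the forbidden partial sums. This tension is most severe for the residue classes that demand the double-negative construction, where intricate combinatorial conditions on the positive multiplicities must be satisfied simultaneously.
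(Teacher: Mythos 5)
This conjecture is \emph{false}, and the paper's contribution regarding it is a refutation, not a proof: Theorem~\ref{thm1} shows that for $S=\{p\}$ a nondegenerate induced cycle of length $n$ exists if and only if $n\equiv 2\pmod{p-1}$, so for instance $S=\{5\}$ admits no such cycle of any length divisible by $4$. Your proposal therefore cannot succeed, and the place where it breaks is exactly the step you flag as covering ``the remaining even residue classes.'' After reducing (as you correctly do) to a zero-sum cyclic sequence with every $u_i=\pm p^{k_i}$, nondegeneracy forces the sequence to have only one negative term or only one positive term. This is the core of the paper's proof of Theorem~\ref{thm1}: normalizing so that the smallest $|u_i|$ equals $1$, the number of $+1$'s must be divisible by $p$, so they group into blocks summing to $p$; any $-p$ would then close a proper zero subsum with such a block, so either there is no $-p$ or the whole cycle is $1+\cdots+1-p$; iterating this $p$-adic grouping through the higher powers yields the claim. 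Consequently your single-block shape $\bigl((+1)^{c_0},\dots,(+p^{r-1})^{c_{r-1}},-p^r\bigr)$ is essentially the \emph{only} nondegenerate shape available, every such cycle has length $\equiv 2\pmod{p-1}$, and the proposed multi-block construction with two negatives $-p^r,-p^s$ is \emph{never} nondegenerate: the Sylvester--Frobenius step has nothing to feed on.

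A secondary but related defect is that even when $\#S\ge 2$ you restrict all entries to powers of the single prime $p=\min S$, which discards precisely the flexibility one needs. The paper's Proposition~\ref{problem} shows that for $\#S\ge 2$ one can at least achieve nondegenerate zero sums of every sufficiently large even length, but the construction essentially requires two distinct primes $p,q\in S$ and an equidistribution (Kronecker) argument to place $p^{\alpha_r}$ between $(r-1)q^{\beta_r}$ and $rq^{\beta_r}$; and even then, upgrading ``nondegenerate'' to ``nondegenerate \emph{and} induced'' for $\#S\ge 2$ is left open in the paper (Conjecture~\ref{NewConj}). So the correct response to the stated conjecture is a counterexample via Theorem~\ref{thm1}, not a proof.
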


We show that this conjecture does not hold by proving the following theorem.

\begin{theorem}\label{thm1}
If $S=\{p\}$ for some prime number $p$, then there are nondegenerate induced cycles of the $S$-unit graph $\mathcal{G}$ of length $n$ if and only if $n\equiv 2 \pmod {p-1}$.
\end{theorem}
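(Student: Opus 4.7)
\emph{Necessity.} My plan is to reduce this to a combinatorial lemma. Normalize the step-differences $u_i = \epsilon_i p^{k_i}$ so that $\min_i k_i = 0$, and let $K = \{k_i : \epsilon_i = +1\}$, $L = \{k_i : \epsilon_i = -1\}$ as multisets of nonnegative integers. Nondegeneracy of the cycle forces $K \cap L = \emptyset$ (a shared exponent with opposite signs gives $u_i + u_j = 0$) and, more strongly, forbids any submultiset match $\sum_{K'} p^a = \sum_{L'} p^b$ with $K' \subseteq K$, $L' \subseteq L$, $(K', L') \neq (\emptyset, \emptyset), (K, L)$. The key lemma is: \emph{for disjoint nonempty multisets $K, L$ of nonnegative integers satisfying such a nondegenerate identity $\sum_{a \in K} p^a = \sum_{b \in L} p^b$, one must have $\min(|K|, |L|) = 1$.} Given the lemma, WLOG $|L| = 1$; then $n_+ = n - 1$, $n_- = 1$, and $\sum_i \epsilon_i = n - 2$. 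Reducing $\sum_i u_i = 0$ modulo $p - 1$ (using $p \equiv 1 \pmod{p-1}$) then yields $n - 2 \equiv 0 \pmod{p-1}$.

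I would prove the lemma by strong induction on $N = \sum_K p^a$. WLOG $0 \in K$, which forces $p \mid N$ (all $L$-exponents are $\geq 1$), so the multiplicity $k_0$ of $0$ in $K$ is a positive multiple of $p$. Form the carried-down pair $K'' = \{0^{k_0/p}\} \cup \{a - 1 : a \in K,\ a \geq 1\}$, $L'' = \{b - 1 : b \in L\}$, which are disjoint and satisfy $\sum_{K''} p^a = \sum_{L''} p^b = N/p$. Nondegeneracy of $(K'', L'')$ is established by lifting any hypothetical proper submatch back to $(K, L)$ via the inverse of the carry, with a two-case analysis depending on whether the lifted submultiset uses the freshly created exponent. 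The induction then gives $\min(|K''|, |L''|) = 1$; since $|L''| = |L|$, the only subtle case is $|K''| = 1$, which forces $K = \{0^p\}$, and then $\sum_L = p$ together with $0 \notin L$ directly yields $|L| \leq 1$.

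\emph{Sufficiency.} For $n = (p - 1)k + 2$ with $k \geq 1$, the identity $p \cdot 1 + (p - 1)(p + p^2 + \cdots + p^{k-1}) = p^k$ produces a multiset of $n$ signed $S$-units summing to zero: $p$ copies of $+1$, $(p - 1)$ copies of $+p^j$ for each $1 \leq j \leq k - 1$, and one $-p^k$. Nondegeneracy is immediate since any proper subset of the positive terms has sum strictly less than $p^k$ and so cannot match the lone negative. For the induced property I would order the $u_i$ in an interleaving pattern (the monotone ordering fails, since the initial $p$ unit steps yield the chord $a_{p+1} - a_1 = p$) and verify directly that no consecutive subsum of length in $\{2, \ldots, n - 2\}$ is an $S$-unit.

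The main difficulty is the combinatorial lemma. Establishing that the carry-down preserves nondegeneracy requires a careful lift-and-case-split argument, and the edge case $K = \{0^p\}$ must be treated separately to complete the induction. The ordering argument in sufficiency is technical but more routine once the identity above is in hand.
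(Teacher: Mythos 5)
Your overall strategy coincides with the paper's: the same identity $p\cdot 1+(p-1)(p+p^2+\cdots+p^{k-1})=p^k$ underlies sufficiency, and necessity is in both cases reduced to showing that exactly one $u_i$ carries the minority sign, after which reduction of $\sum_i u_i=0$ modulo $p-1$ gives $n\equiv 2$. Your carry-down induction on $N$ is the mirror image of the paper's procedure of grouping the $1$'s into blocks of $p$ and carrying upward, so the necessity argument is essentially theirs in different packaging. The one genuine gap is in sufficiency: the ordering is where the entire content of the ``induced'' condition lives, and you leave it unspecified. Your own observation that the monotone ordering fails shows this is not automatic, and ``an interleaving pattern'' is not a proof. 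The ordering that works (it is the one the paper writes down, in the scaled form $\frac{1}{p^d}+\underbrace{\frac1p+\cdots+\frac1p}_{p-1}+\cdots+\underbrace{\frac1{p^d}+\cdots+\frac1{p^d}}_{p-1}=1$) splits the $p$ copies of $1$ as $1+(p-1)$: place a single $+1$ first, then the $(p-1)$-blocks of $p^{k-1},p^{k-2},\ldots,p,1$ in decreasing order, then the closing $-p^k$. One then checks that every consecutive subsum of length between $2$ and $n-2$ either equals $c\,p^j$ with $2\le c\le p-1$, or, after factoring out the smallest power of $p$ occurring in it, has cofactor greater than $1$ and prime to $p$; hence it is not in $\Z_S^*$. (The paper also offers a construction-free route to sufficiency by citing its Theorem~\ref{thm2}, whose lexicographically-extremal-solution argument produces an admissible ordering abstractly.)

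A smaller hole: in your lemma the claim that $K''$ and $L''$ are disjoint is not automatic. One always has $0\in K''$, and $0\in L''$ precisely when $1\in L$; since $k_0\ge p$, the pair $\{0^p\}\subseteq K$, $\{1\}\subseteq L$ would then give a proper submatch $p\cdot p^0=p^1$, contradicting nondegeneracy unless $K=\{0^p\}$ and $L=\{1\}$, which is exactly the base case you treat separately. This needs to be said, because the inductive hypothesis is only available for disjoint pairs. With these two repairs the argument is complete and is essentially the paper's proof.
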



Ruzsa further posed another conjecture. For a given finite nonempty set of primes $S$ let
\begin{equation}\label{defm}
m=\gcd \{p-1 : p\in S\}.
\end{equation}
Note that if there exist positive $S$-units $u_1, u_2, \ldots, u_k$ such that $u_1+u_2+\cdots+u_k=1$, by multiplying the expression by the least common multiple of the denominators of $u_i$'s and by considering the congruence relation modulo $m$, we get that $k\equiv 1 \pmod{m}$, because $p^{d}\equiv 1 \pmod{m}$ for every $p\in S$ and every $d\in \N\cup\{0\}$. Ruzsa conjectured that for every sufficiently large $k$ such that $k\equiv 1 \pmod{m}$ there exist positive $S$-units $u_1, u_2, \ldots, u_k$ such that $u_1+u_2+\cdots+u_k=1$ and
\[
u_i+u_{i+1}+\cdots+u_j\notin \Z_S^* \quad \textnormal{for all}\  1\leq i<j\leq k, \ (i, j)\neq (1, k).
\]
We show that this conjecture, which is Conjecture 3.5 in \cite{R11} (with a misprint $m\mid k$ in place of $k\equiv 1 \pmod{m}$), is true, i.e.\@ the following theorem holds. 

\begin{theorem}\label{thm2}
Let $S$ be a finite nonempty set of primes and let $m$ be as in \eqref{defm}. Then for every sufficiently large $k$ with $k\equiv 1 \pmod{m}$ there exist positive $S$-units $u_1, u_2, \ldots, u_k$ such that
\begin{equation}\label{sum}
u_1+u_2+\cdots+u_k=1
\end{equation}
and 
\begin{equation}\label{unit}
u_i+u_{i+1}+\cdots+u_j\notin \Z_S^* , \quad \textnormal{for all}\  1\leq i<j\leq k, \ (i, j)\neq (1, k).
\end{equation}
\end{theorem}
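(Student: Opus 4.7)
The plan is to prove existence by induction on $k$, starting from a small base case and enlarging by $p-1$ for primes $p\in S$. Since $\gcd\{p-1:p\in S\}=m$, the Sylvester--Frobenius theorem gives that every sufficiently large $k\equiv 1\pmod m$ can be expressed as $k=k_0+\sum_{p\in S}c_p(p-1)$ for a convenient base length $k_0$ and nonnegative integers $c_p$. Thus it suffices to (a) establish valid base decompositions for the finitely many small values $k_0$ and (b) provide an extension procedure that, for each $p\in S$, enlarges a valid length-$k$ decomposition to a valid length-$(k+p-1)$ one.

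For the base case I would take $k_0=p_0$ with $p_0=\min S$ and use the uniform decomposition $u_1=\cdots=u_{p_0}=1/p_0$. Every proper consecutive partial sum is $\ell/p_0$ with $2\le\ell\le p_0-1$; since every prime factor of $\ell$ is strictly smaller than $p_0$ (and hence not in $S$), the partial sum is not an $S$-unit. This handles $k=p_0$, and combined with finitely many further explicit small-$k$ cases it covers all base lengths reached by the Sylvester--Frobenius representation.

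The heart of the proof is the \emph{extension step}. Given a valid $1=u_1+\cdots+u_k$ and a prime $p\in S$, I would select a consecutive pair $u_j+u_{j+1}$ (whose sum is not an $S$-unit by hypothesis) and replace it by a block $w_1+\cdots+w_{p+1}$ of positive $S$-units summing to $u_j+u_{j+1}$; this lengthens the decomposition by $p-1$. The block would be constructed to depend on a free scaling $S$-unit parameter $t$ in such a way that each new partial sum, whether internal to the block or mixing across its boundary with the surrounding $u_i$'s, depends on $t$ in an arithmetically nontrivial way. A given partial sum becomes an $S$-unit only when $t$ satisfies an $S$-unit equation, and by the Evertse--Schlickewei--Schmidt theorem each such equation has only finitely many solutions. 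Hence only finitely many $t$ are forbidden in total, and any $t$ outside this exceptional set yields a valid length-$(k+p-1)$ decomposition.

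The main obstacle is the design of the block so that the scaling parameter $t$ acts nontrivially on every boundary partial sum; otherwise the finiteness argument cannot rule out the bad $t$'s. This appears to require the block to carry several independent parameters (with $t$ among them) and a careful appeal to the general multivariable $S$-unit equation theorem, together with some care for small-$p$ cases (in particular $p=2$, where the block has only three terms and little internal flexibility). Once the block is designed successfully, iterating the extension step from the base case proves the theorem for all sufficiently large $k\equiv 1\pmod m$.
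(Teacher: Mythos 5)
Your first stage (realizing every large $k\equiv 1\pmod m$ as a length via a base decomposition plus repeated lengthening by $p-1$, justified by the numerical-semigroup/Sylvester--Frobenius argument) matches the paper's Proposition~\ref{Lemma} almost exactly, and your base case $u_1=\cdots=u_{p_0}=1/p_0$ is sound. The problem is the extension step, which is not merely unfinished but obstructed by the very finiteness theorem you invoke. You propose to replace a consecutive pair $u_j+u_{j+1}$ by a block $w_1+\cdots+w_{p+1}$ of positive $S$-units with the \emph{fixed} sum $c:=u_j+u_{j+1}$, depending on a free $S$-unit parameter $t$, and then to discard the finitely many bad values of $t$. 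But the equation $w_1+\cdots+w_{p+1}=c$ in $S$-units is a weighted $S$-unit equation ($\sum_i (w_i/c)=1$), and since all $w_i$ are positive no subsum vanishes; hence by the van der Poorten--Schlickewei/Evertse theorem it has only \emph{finitely many} solutions altogether. There is no infinite family parametrized by $t$ from which to excise a finite exceptional set, so the ``avoid finitely many forbidden $t$'' argument has nothing to run on. (The naive parameter-free surgery, splitting $u_j$ into $p$ copies of $u_j/p$, fails for a different reason: the block then has the $S$-unit $u_j$ as a consecutive partial sum, destroying \eqref{unit}.) So the heart of the proof is missing and cannot be supplied along the route you sketch.

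The paper circumvents this with a global extremal argument rather than a local surgery: having secured, for the given length $k$, \emph{some} representation $u_1+\cdots+u_k=1$ in positive $S$-units, it uses Proposition~\ref{Evertse} to know there are finitely many such representations, picks the lexicographically last one, and shows that for this one any subset $I$ with $\#I\ge 2$ whose sum is an $S$-unit must be a tail $\{j,\ldots,k\}$ --- otherwise one could merge the terms of $I$ into a single $S$-unit and re-split a later term into $\#I$ pieces, producing a lexicographically later solution. A single cyclic shift then turns ``only tails can be $S$-units'' into condition \eqref{unit}. If you want to salvage your write-up, replace your extension step by this selection argument; your first stage can stand as is.
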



Let $m$ be as in \eqref{defm} and let $n\in \N$ be such that $n\equiv 2 \pmod{m}$. 
Take arbitrary $a\in \Q$ and let $k=n-1$. Hence $k\equiv 1 \pmod{m}$. Assume that $k$ is sufficiently large, so that by Theorem~\ref{thm2} there exist positive $S$-units $u_1, u_2, \ldots, u_k$ such that \eqref{sum} and \eqref{unit} hold. Consider the following cycle of the $S$-unit graph $\mathcal{G}$:
\[
a\to a+u_1\to a+u_1+u_2\to \cdots \to a+u_1+u_2+\cdots +u_k\to a.
\]
Note that this is indeed a cycle of $\mathcal {G}$ of length $n$ because \eqref{sum} holds. It is an induced cycle of $\mathcal{G}$ because \eqref{unit} holds. Furthermore, it is nondegenerate because $u_i$'s are all positive, so no proper zero subsum of $u_1+u_2+\cdots+u_n-1$
exists. Therefore, Theorem~\ref{thm2} has the following corollary.

\begin{corollary}
Let $S$ be a finite nonempty set of primes and let $m$ be as in \eqref{defm}. Then for every sufficiently large $n$ with $n\equiv 2 \pmod{m}$ there exists a nondegenerate induced cycle of the $S$-unit graph $\mathcal{G}$ of length $n$.
\end{corollary}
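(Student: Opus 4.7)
The plan is to deduce the corollary from Theorem~\ref{thm2} by promoting the partition of unity into a closed walk in $\mathcal{G}$ whose properties, inherited from \eqref{sum} and \eqref{unit}, force it to be a nondegenerate induced cycle. Given $n\equiv 2\pmod{m}$ sufficiently large, I would set $k=n-1$, so that $k\equiv 1\pmod{m}$ and $k$ is large enough for Theorem~\ref{thm2} to apply; this yields positive $S$-units $u_1,\ldots,u_k$ satisfying \eqref{sum} and \eqref{unit}. Fix any $a\in\Q$ and define $a_1=a$ and $a_i=a+u_1+\cdots+u_{i-1}$ for $2\le i\le n$. Positivity of the $u_i$'s gives $a_1<a_2<\cdots<a_n$, so the candidate vertices are distinct.

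I would then verify the three defining properties of a nondegenerate induced cycle in turn. The edges satisfy $a_{i+1}-a_i=u_i\in\Z_S^*$ for $1\le i\le n-1$ and $a_1-a_n=-(u_1+\cdots+u_k)=-1\in\Z_S^*$, so $a_1\to a_2\to\cdots\to a_n\to a_1$ is a cycle of length $n$ in $\mathcal{G}$. To confirm that it is induced I would apply the characterization recalled in the excerpt with cycle-edges $u_1,\ldots,u_{n-1}$ (the $u_i$'s supplied by Theorem~\ref{thm2}) together with $u_n=-1$. Any consecutive block $u_i+\cdots+u_j$ with $1\le i<j\le n$ and $(i,j)\notin\{(1,n-1),(2,n)\}$ splits into two cases: if $j\le n-1$, then $(i,j)\ne(1,k)$, and the block lies outside $\Z_S^*$ by \eqref{unit}; if $j=n$ the block equals $-(u_1+\cdots+u_{i-1})$, which is $0\notin\Z_S^*$ when $i=1$ and, for $i\ge 3$ (whence $2\le i-1\le k-1$), is the negative of a partial sum forbidden by \eqref{unit}.

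Nondegeneracy amounts to ruling out proper zero subsums of $u_1+\cdots+u_k+(-1)=0$. A subsum avoiding the $-1$ consists of strictly positive terms and cannot vanish; a subsum containing $-1$ vanishes precisely when $\sum_{t\in I}u_t=1$ for some $I\subseteq\{1,\ldots,k\}$, and strict positivity combined with $\sum_{t=1}^{k} u_t=1$ forces $I=\{1,\ldots,k\}$, so that subsum is the full sum and is not proper. No genuine obstacle arises: the only real content lies in Theorem~\ref{thm2}, and once it is granted the corollary is essentially a bookkeeping exercise, with the mild nondegeneracy check above being the only step requiring any care.
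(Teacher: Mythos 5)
Your proposal is correct and follows essentially the same route as the paper: set $k=n-1$, invoke Theorem~\ref{thm2}, and form the cycle $a\to a+u_1\to\cdots\to a+u_1+\cdots+u_k\to a$, with positivity of the $u_i$'s giving both distinctness of vertices and nondegeneracy, and \eqref{unit} giving the induced property. The paper states these verifications more tersely (in the paragraph preceding the corollary), but the argument is the same.
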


Finally, Ruzsa proposed the following problem (which is Problem 3.6~in~\cite{R11}) concerning general subgraphs of the $S$-unit graph $\G$.  Recall that a graph $G=(V, E)$ with vertex set $V$ and edge set $E$ is a subgraph of $\G$ if to each $i\in V$ we can assign a rational {\it value} $q_i$ so that $q_i\neq q_j$ if $i\neq j$, and that if there exists an edge $(i, j)\in E$ between $i$ and $j$ in $G$, then $q_i-q_j$ is an $S$-unit. A subgraph $G=(V, E)$ of $\G$ is said to be {\it induced} subgraph of $\G$ if to each $i\in V$ we can assign a rational value $q_i$ so that $q_i\neq q_j$ if $i\neq j$, and that there exists an edge between $i$ and $j$ in $G$ if and only if $q_i-q_j$ is an $S$-unit. Ruzsa asked if there exists a finite subgraph of $\G$ which is not induced subgraph of $\G$, and further noted that he expects a positive answer.  His motivation to formulate this problem was that in his constructions the main difficulty arose from the requirement on cycles to be induced. We give an affirmative answer to Ruzsa's question by proving the following theorem.

\begin{theorem} \label{thm3}
Given a finite nonempty set $S$ of primes, there exists a subgraph of the $S$-unit graph $\G$, which is not an induced subgraph of $\G$.
\end{theorem}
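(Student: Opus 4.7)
The plan is to construct, for the given set $S$, an explicit finite graph $G$, exhibit a realization $\phi_0 \colon V(G) \to \Q$ showing $G$ is a subgraph of $\G$, and then argue that every realization of $G$ assigns an $S$-unit difference to some non-edge of $G$, so $G$ cannot be an induced subgraph.

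The central tool is the finiteness of common-neighbor sets in $\G$: for any two vertices $a,b$, by (a consequence of) the $S$-unit equation theorem of Mahler and Evertse, the set $\{c\in\Q:c-a,\;c-b\in\Z_S^*\}$ is finite. Consequently, in any realization of $G$, the vertices required to be common neighbors of some pair map into a known finite set, and by choosing $G$ so that enough vertices are constrained in this way, I can try to force a configuration in which two vertices meant to be non-adjacent in $G$ end up satisfying an $S$-unit relation.

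In the case $2\in S$, so that $\G$ contains the triangle $\{0,1,2\}$, I would take $G$ to be the union of two maximal triangle-books sharing a common base triangle $\{A,B,C\}$ with spines $\{A,B\}$ and $\{A,C\}$; each book's pages correspond bijectively to the (finite) common-neighbor set of its spine edge, excluding the third vertex of the base triangle. A verification splitting into the finitely many shapes of the base triangle, up to translation and $S$-unit scaling, then shows that some cross-pair of pages (one from each book) must be at $S$-unit distance. Concretely for $S=\{2\}$ with base triangle $\{0,1,2\}$, the book pages are $\{-1,1/2\}$ and $\{-2,4\}$, and $-1-(-2)=1\in\Z_S^*$ gives the forced extra edge; for general $S\ni 2$ the pair $(-1,-2)$ always appears in the two page sets, with difference $1$ an $S$-unit.

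In the case $2\notin S$, so that $m:=\gcd\{p-1:p\in S\}\ge 2$ and $\G$ is bipartite and contains no triangles, I would use an analogous construction based on $4$-cycles in place of triangles, combining several $4$-cycle books through different spine edges sharing a common vertex. The finiteness of nondegenerate solutions to the four-term $S$-unit equation governing $4$-cycle completions provides the needed rigidity, and one again aims to force a cross-pair at $S$-unit distance by case analysis. The main obstacle lies precisely in this bipartite case: the degenerate $C_4$-completions form infinite families (for instance $(1+3^a,3^a)$ for $S=\{3\}$), so the book-through-a-single-edge construction tends to admit induced realizations, and a more elaborate combinatorial structure — multiple books through distinct spines meeting at a vertex, or the use of longer even cycles — is required to close the forcing argument and uniformly handle every $S$.
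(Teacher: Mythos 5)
Your proposal has two genuine gaps. First, in the case $2\in S$ you only verify that the \emph{natural} realization of the two triangle-books acquires a forced extra edge (e.g.\ $-1-(-2)=1$); but to show $G$ is not an induced subgraph you must rule out \emph{every} realization. You gesture at "splitting into the finitely many shapes of the base triangle," and indeed for $S=\{2\}$ this can be checked by hand, but for general $S\ni 2$ the common-neighbour sets (and hence the number of pages and of base-triangle shapes) are finite but of unknown and unbounded size, and you give no uniform argument that some cross-pair of pages is forced to $S$-unit distance in each shape. Observing that the pair $(-1,-2)$ occurs in the natural realization does not address realizations in which the base triangle maps to a different triangle of $\G$. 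Second, and more seriously, you explicitly concede that the bipartite case $2\notin S$ is not closed: the degenerate $C_4$-completions form infinite families, so your book-through-an-edge rigidity fails, and you leave the needed "more elaborate combinatorial structure" unspecified. Since the theorem must be proved for every nonempty $S$, the argument as it stands does not establish the statement.

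For comparison, the paper's proof splits on the four-term equation $1+x=y+z$ in $S$-units rather than on whether $2\in S$, and turns the degeneracy you identify as an obstacle into the engine of the proof. If \emph{all} $4$-cycles through an edge are degenerate (parallelograms), then the values of a combinatorial $3$-cube on subscripts $\{0,1,a,b,1{+}a,1{+}b,a{+}b,1{+}a{+}b\}$ are completely rigid, so deleting one edge of the cube yields a subgraph whose endpoints are still forced to differ by an $S$-unit. If instead there are nondegenerate solutions, the paper collects all of them into a finite set $R_0$, shows by counting paths of length $2$ that any induced representation of the corresponding graph $G_0$ must have value set exactly $R_0$, and then takes two translated copies of $G_0$ joined by a matching with one edge removed; the rigidity of both copies forces the missing matching edge back. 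You may find it instructive that the paper never needs triangles at all, which is why it handles $2\notin S$ with the same mechanism.
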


In Section~\ref{SecProofs} we prove Theorem~\ref{thm1}, Theorem~\ref{thm2} and Theorem~\ref{thm3}, and in Section~\ref{SecRemarks} we make some further remarks on Conjecture~\ref{conj1}.


\section{Proofs of the theorems} \label{SecProofs}
Throughout this section $S$ denotes a fixed finite nonempty set of primes and $m=\gcd \{p-1 : p\in S\}$. We first prove Theorem~\ref{thm2}. Call $k\in \N$ {\it $S^+$-good} if there exist positive $S$-units $u_1, u_2, \ldots, u_k$ such  that
\begin{equation}\label{S-units}
u_1+u_2+\cdots+u_k=1.
\end{equation}
Note that $p$ is $S^+$-good for all $p\in S$; this follows by setting $u_i=1/p$ in \eqref{S-units} for all $i=1, 2, \ldots, k$. We will use the fact that the number of solutions in positive $S$-units $u_1, u_2, \ldots, u_k$ of equation \eqref{S-units} for a fixed $k\in \N$ is finite. That follows from the following deep result of Van der Poorten and Schlickewei~\cite{PS82} and, independently, Evertse~\cite{E84}. 

\begin{proposition}\label{Evertse}
Let $S$ be a finite nonempty set of primes, $k\in \N$. Then the number of solutions of the equation
\[
u_1+u_2+\cdots+u_k=1
\]
in $S$-units $u_1, u_2, \ldots, u_k$ such that
\[
u_{i_1}+u_{i_2}+\cdots+u_{i_l}\neq 0
\]
for all $l, i_1, i_2, \ldots, i_l\in \N$ satisfying $0<l<k$ and $1\leq i_1<\cdots<i_l\leq k$, is bounded.
\end{proposition}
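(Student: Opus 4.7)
The plan is to derive this finiteness from Schlickewei's $p$-adic version of Schmidt's subspace theorem, following the route taken by Evertse and by van der Poorten--Schlickewei. First I would reformulate the problem projectively: each nondegenerate solution gives a point $[u_1:\cdots:u_k]\in\mathbb{P}^{k-1}(\Q)$ lying on the affine hyperplane $x_1+\cdots+x_k=1$, and the $S$-unit property controls the Weil height of this point through the product formula applied to the places in $S\cup\{\infty\}$.

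Next I would apply the subspace theorem to an appropriately chosen family of linear forms: at each finite place $v$ corresponding to a prime in $S$, take the coordinate forms $x_1,\ldots,x_k$; at the archimedean place, take $x_1,\ldots,x_{k-1}$ together with $x_1+\cdots+x_k$. The $S$-unit hypothesis makes the product of the normalized evaluations of these forms at any solution small relative to the projective height, so Schlickewei's theorem places every solution in one of finitely many proper linear subspaces of $\Q^k$.

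I would then argue by induction on $k$, with base case $k=2$ furnished by Mahler's classical two-variable $S$-unit equation. Each exceptional proper subspace yields a nontrivial linear relation $c_1 u_1 + \cdots + c_k u_k = 0$ among the coordinates of any solution it contains. Partitioning this relation into minimal vanishing subsums and invoking the nondegeneracy hypothesis $u_{i_1}+\cdots+u_{i_l}\neq 0$ to control those subsums, one can eliminate at least one variable by expressing it as a rational combination of the others; the resulting system reduces to a bounded collection of $S$-unit equations in at most $k-1$ variables, which the inductive hypothesis handles.

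The hard part will be invoking the subspace theorem itself, a deep Diophantine approximation statement that I would cite rather than reprove. A subsidiary difficulty is the bookkeeping in the inductive step: one must verify that after restricting to each exceptional subspace, the residual nondegeneracy condition is preserved (up to excluding only finitely many sporadic solutions), and that every branch of the case analysis lands in a strictly smaller problem so that the induction terminates.
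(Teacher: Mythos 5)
The paper does not actually prove this proposition: it is stated as a known deep theorem of Van der Poorten--Schlickewei and, independently, Evertse, and the proof is deferred entirely to the cited references \cite{PS82}, \cite{E84}. So your proposal is not competing with an argument in the paper; it is a sketch of the argument that lives in those references, and in outline it follows the standard route (Schlickewei's $p$-adic subspace theorem, induction on $k$ with Mahler's two-term $S$-unit equation as the base case, and a reduction of each exceptional subspace to lower-dimensional unit equations by splitting off minimal vanishing subsums). For the purposes of this paper, the citation is all that is required.

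That said, the one concrete step you commit to does not work as written. Clear denominators, so that a solution becomes $y_1+\cdots+y_k=y_0$ with each $y_i$ an integer supported on $S$ and $\prod_{v\in S\cup\{\infty\}}|y_i|_v=1$. With your forms (coordinate forms at the finite places; $x_1,\dots,x_{k-1}$ together with $x_1+\cdots+x_k$ at infinity) the normalized double product equals $\bigl(|y_0|/|y_k|\bigr)\cdot H(\mathbf{y})^{-k}$, whereas the subspace theorem requires it to be at most $H(\mathbf{y})^{-k-\epsilon}$; since $|y_k|$ can equal $1$ while $|y_0|$ is comparable to $H(\mathbf{y})$, the required inequality fails for infinitely many putative solutions, so the theorem simply does not apply with this fixed system of forms. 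The published proofs repair this by first partitioning the solutions into finitely many classes according to which coordinate is $v$-adically largest at each place $v\in S\cup\{\infty\}$, and then, within each class, replacing the dominant coordinate form at each place by the sum form; this yields a product of size $H(\mathbf{y})^{-(k+1)}$ and the subspace theorem applies to each class separately. The inductive bookkeeping you defer to the end is also where most of Evertse's actual work lies, but since the authors treat the entire statement as a citable black box, neither gap needs to be filled in this paper.
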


To prove Theorem~\ref{thm2} we further need the following proposition.

\begin{proposition}\label{Lemma}
Let $S$ be a finite nonempty set of primes and let $m$ be as in \eqref{defm}. For every sufficiently large $k$ such that $k\equiv 1 \pmod{m}$ there exist positive $S$-units $u_1, u_2, \ldots, u_k$ such that
\[
u_1+u_2+\cdots+u_k=1.
\]
\end{proposition}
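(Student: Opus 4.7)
The plan is to exploit a simple monoid structure on the set of $S^+$-good integers. To begin, I would record two elementary $S^+$-good values: $k=1$ is $S^+$-good via the trivial decomposition $u_1=1$, and, as already noted in the text, each $k=p$ with $p\in S$ is $S^+$-good via $u_1=\cdots=u_p=1/p$. Setting
\[
G=\{k-1 : k\in\N \text{ is } S^+\text{-good}\},
\]
these observations give $0\in G$ and $p-1\in G$ for every $p\in S$.

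The crucial step is the closure property: if $k$ and $\ell$ are $S^+$-good, then so is $k+\ell-1$. Given $1=u_1+\cdots+u_k$ and $1=v_1+\cdots+v_\ell$ with all $u_i,v_j$ positive $S$-units, I would split the last term of the first decomposition by multiplying the second by $u_k$:
\[
1 = u_1+\cdots+u_{k-1}+u_k v_1+u_k v_2+\cdots+u_k v_\ell.
\]
Since $\Z_S^*$ is a multiplicative group, each $u_k v_j$ is again a positive $S$-unit, and we obtain a valid decomposition of $1$ into $k+\ell-1$ positive $S$-units. Hence $G$ is an additive submonoid of $\Z_{\geq 0}$ containing $p-1$ for every $p\in S$.

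The conclusion would follow from the classical Sylvester--Frobenius theorem on numerical semigroups, which asserts that the submonoid of $\Z_{\geq 0}$ generated by any finite set of positive integers with greatest common divisor $d$ contains every sufficiently large multiple of $d$. Since $\gcd\{p-1 : p\in S\}=m$ by definition, this applies to the generators of $G$ and shows that $G$ contains every sufficiently large multiple of $m$, which is exactly the statement of the proposition. I do not foresee a genuine obstacle; the only point deserving attention is verifying that the substitution in the closure step produces positive $S$-units, and this is immediate from the definitions since $\Z_S^*$ is closed under multiplication and the $u_i, v_j$ are positive.
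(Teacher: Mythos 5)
Your proof is correct and follows essentially the same route as the paper: the paper's key step is that any positive $S$-unit $u$ can be replaced by $p$ copies of $u/p$, which lengthens a decomposition by $p-1$ (the special case of your closure property with the second decomposition $1=1/p+\cdots+1/p$), and it then proves by an explicit Bezout-plus-division argument the same numerical-semigroup fact that you invoke via Sylvester--Frobenius. Your packaging (a monoid $G$ closed under addition, plus a citation of the classical result) is a little cleaner but not a genuinely different argument.
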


\begin{proof}
We want to prove that any sufficiently large $k\in \N$ such that $k\equiv 1 \pmod{m}$ is $S^+$-good.
Recall that $p$ is $S^+$-good for all $p\in S$. If $\#S\geq 2$ and $p, q\in S$, note that
\[
\underbrace{\frac{1}{q}+ \cdots +\frac{1}{q}}_{q-1}+\underbrace{\frac{1}{pq}+\cdots+\frac{1}{pq}}_{p}=1,
\]
hence $k=q-1+p=(p-1)+(q-1)+1$ is $S^+$-good as well. Further, note that for any $p\in S$ we can replace any $S$-unit $u>0$ by a sum of $p$ positive $S$-units by using
\[
u=\underbrace{\frac{u}{p}+\cdots+\frac{u}{p}}_{p}.
\]
Writing $S=\{p_1, p_2, \ldots, p_d\}$, it follows that any $k$ of type
\begin{equation}\label{good}
a_1(p_1-1)+a_2(p_2-1)+\cdots+a_d(p_d-1)+1,
\end{equation}
with $a_i\in \N\cup \{0\}$, $i=1, 2, \ldots, d$, is $S^+$-good as well, because we may lengthen the sum by $p_i-1$ as many times as we like for all $i=1, 2, \ldots, n$. Since $m=\gcd\{p_1-1, p_2-1, \ldots, p_d-1\}$, there exist integers $b_i\in \Z$, $i=1, 2, \ldots, d$ such that
\begin{equation}\label{m}
m=b_1(p_1-1)+b_2(p_2-1)+\cdots+b_d(p_d-1).
\end{equation}
Let $M:=(p_1-1)/m$ and let $k_0$ be as follows
\begin{equation}\label{k_min}
k_0=M|b_1|(p_1-1)+\cdots +M|b_d|(p_d-1)+1.
\end{equation}
Then from \eqref{good} it follows that $k_0$ is $S^+$-good. Further note that for every $j\in\{1,\ldots,M-1\}$, $k_0+jm$ is of type \eqref{good}, and is hence $S^+$-good; this follows from adding \eqref{m} to \eqref{k_min} $j$ times. Finally we show that every $k\geq k_0$ such that $k \equiv 1 \pmod{m}$ is of type \eqref{good}, and is hence $S^+$-good.

Indeed, write $k-k_0=q(p_1-1)+r_1$ where $q, r_1\in \N\cup\{0\}$ and $r_1<p_1-1$. Then $r_1=rm$ for some $r\in \{0, 1,\ldots,M-1\}$. We have that $k_0+rm$ is $S^+$-good, as shown in the previous step, and hence $k=k_0+rm+q(p_1-1)$ is $S^+$-good as well, because it is of type \eqref{good}.
\end{proof}

Next we prove Theorem~\ref{thm2} by using Proposition~\ref{Lemma} and by following the approach of Ruzsa from the proof of Theorem 3.3\ in \cite{R11}.

\begin{proof}[Proof of Theorem~\ref{thm2}]
From Proposition~\ref{Lemma} it follows that every sufficiently large $k\in \N$ such that $k\equiv 1 \pmod{m}$ is $S^+$-good. So let $k\equiv 1 \pmod{m}$ be sufficiently large and let $u_1, u_2, \ldots, u_{k}$ be positive $S$-units such that
\begin{equation}\label{sumproof}
u_1+u_2+\cdots+u_{k}=1.
\end{equation}
Recall that there are only finitely many solutions $(u_1, u_2, \ldots, u_k) $ in positive $S$-units of equation \eqref{sumproof} for fixed $k$, see Proposition~\ref{Evertse}. From these solutions select the lexicographically last and denote it by $(u_1', u_2', \ldots, u_{k}')$. Assume that there exists $I\subseteq \{1, 2, \ldots, k\}$ with $\#I\geq 2$ such that
\begin{equation}\label{unitsum}
v:=\sum_{i\in I} u_i'\ \textnormal{is an $S$-unit}.
\end{equation}
We will show that then necessarily $I=\{j, j+1, \ldots, k\}$ for some $j$ with $1\leq j\leq k-1$. Note that from \eqref{unitsum} it follows that $\# I\equiv 1 \pmod{m}$. Let $j'$ be the minimal element of $I$. Assume to the contrary that there exists $l\notin I$ such that $j'<l\leq k$. Then we can find a solution of \eqref{sumproof} which is lexicographically later than $(u_1', u_2', \ldots, u_{k}')$ by the following transformations. Replace $u_{j'}'$ by $v$ and delete all $u_i'$, $i\in I$, $i\neq j'$; then replace $u_l$ by $\# I$ $S$-units whose sum is $u_l$, which can be obtained by multiplying \eqref{unitsum} by the $S$-unit $u_lv^{-1}$. This is a solution of equation \eqref{sumproof} which is lexicographically later since we increased $u_{j'}'$, and left $u_1', u_2', \ldots, u_{j'-1}'$ unchanged. However, this is a contradiction.

Hence, there exist positive $S$-units $u_1', u_2', \ldots, u_{k}'$ satisfying $u_1'+u_2'+\cdots+u_{k}'=1$ and such that if
\begin{equation}\label{sumI}
\sum_{i\in I} u_i'\in \Z_S^*, \quad \textnormal{for some}\ \# I\geq 2,
\end{equation}
then $I=\{j, j+1, \ldots, k\}$ for some $1\leq j\leq k-1$. We may choose positive $S$-units $u_1^*, u_2^*, \ldots, u_{k}^*$ that satisfy $u_1^*+u_2^*+\cdots+u_{k}^*=1$ and
\[
u_{i}^*+u_{i+1}^*+\cdots+u_{j}^*\notin \Z_S^*, \quad 1\leq i<j\leq k, \quad (i, j)\neq (1, k),
\]
by setting $u^*_i=u'_{i-1}$ for $i=2,\ldots,k$ and $u^*_1=u'_{k}$. Indeed, if $u_{i}^*+u_{i+1}^*+\cdots+u_{j}^*$, with  $1\leq i<j\leq k$, is an $S$-unit, then it follows from \eqref{sumI} that both $u_{k-1}'$ and $u_k'$ appear in the sum, i.e.\ both $u_k^*$ and $u_{1}^*$ appear in the sum, hence $i=1$ and $j=k$.
\end{proof}

We now prove Theorem~\ref{thm1}.

\begin{proof}[Proof of Theorem~\ref{thm1}]  Let $S=\{p\}$ for some prime number $p$. We first prove that if $n\equiv 2 \pmod{p-1}$, then there exist nondegenerate induced cycles of $\G$ of length $n$. Let $k=n-1$. Then $k\equiv 1 \pmod{p-1}$ and by Theorem~\ref{thm2} there exist positive $S$-units $u_1, u_2, \ldots, u_{k}$ such that $u_1+u_2+\cdots+u_{k}=1$ and that
\begin{equation}\label{noedges}
u_{i}+u_{i+1}+\cdots+u_{j}\notin \Z_S^*, \quad \textnormal{for all} \ 1\leq i<j\leq k, \ (i, j)\neq (1, k).
\end{equation}
Namely, since $S=\{p\}$ it follows that $m=p-1$, where $m$ is defined as in \eqref{defm}.
In fact, we can give such $S$-units explicitly. Indeed, if $d=(k-1)/(p-1)$, then
\begin{equation}\label{sum1}
\frac{1}{p^d}+\underbrace{\frac{1}{p}+\cdots+\frac{1}{p}}_{p-1}+\underbrace{\frac{1}{p^2}+ \cdots +\frac{1}{p^2}}_{p-1}+ \cdots +\underbrace{\frac{1}{p^d}+\cdots+\frac{1}{p^d}}_{p-1} =1.
\end{equation}
One easily checks that the condition \eqref{noedges} is also satisfied. Note that for arbitrary $a\in \Q$, the cycle $a\to a_1+u_1\to a+u_1+u_2\to \cdots \to a+u_1+\cdots+u_{k}\to a$ is a nondegenerate induced cycle of length $n$. It is indeed an induced cycle because condition \eqref{noedges} holds. It is nondegenerate because all the $u_i$'s are positive.

Next we prove that the condition $n\equiv 2 \pmod {p-1}$ is  necessary. Assume that there exists a nondegenerate induced cycle
\begin{equation}\label{cycle}
a_1\to a_2\to \cdots \to a_n\to a_1
\end{equation}
of $\G$ of length $n$. Let $u_i=a_{i+1}-a_i$, $i=1, 2, \ldots, n-1$ and $u_n=a_1-a_n$. Then 
\begin{equation}\label{nulsuma}
u_1+u_2+\cdots+u_n=0.
\end{equation}
Without loss of generality we may assume that $u_i\in\Z$ for all $i=1, 2, \ldots, n$, as we can multiply them by the least common multiple of the denominators of $u_i$'s. In doing so the corresponding cycle remains to be nondegenerate induced cycle of $\G$. Then $u_i\in \{p^{k_i}, -p^{k_i}\}$ for some $k_i\in \N\cup\{0\}$. By assumption no proper zero subsum of \eqref{nulsuma} exists. In what follows we show that among all $u_i's$ only one is negative or only one is positive.

Without loss of generality assume that $u_1$ is the smallest in absolute value among all the $u_i$'s.  We may further assume $u_1=1$, since otherwise we may divide \eqref{nulsuma} by $u_1$. Note that then $u_i\neq -1$ for all $i\geq 2$, since otherwise $u_1+u_i=0$, which would lead to degeneracy of the cycle \eqref{cycle}. Let $b_1$ be the total number of occurrences of $1$'s in the sum. Since $u_1+\cdots+u_n\equiv 0 \pmod{p}$ it follows that $b_1\equiv 0\pmod{p}$. Hence, we can group the $u_i$'s with value 1 into \emph{blocks} of size $p$. By nondegeneracy, either $u_i\neq-p$ for all $i=1, 2, \ldots, n$, or there is exactly one negative $u_i$ (and it equals $-p$) in which case we are done. 
Now, let $b_p$ be the total number of occurrences of $p$'s in the sum plus the number of $p$-blocks of 1's. Then, since $u_1+\cdots+u_n\equiv 0 \pmod{p^2}$ it follows that $b_p\equiv 0\pmod{p}$. Then by nondegeneracy, 
either $u_i\neq -p^2$ for all $i=1, 2, \ldots, n$ or there is exactly one negative $u_i$ (and it equals $-p^2$) in which case we are done. 
Continuing this inductive reasoning we get that only one $u_i$ is negative among all $u_i$'s, say $u_n$, hence $p^{k_1}+p^{k_2}+\cdots+p^{k_{n-1}}=p^{k_n}$. Since $p^{k_i}\equiv 1\pmod{p-1}$ for all $i=1, 2, \ldots, n$, it follows that $n-1\equiv 1\pmod{p-1}$, hence $n\equiv 2 \pmod{p-1}$.
\end{proof}

\begin{remark}\label{notind}
Note that in the proof of Theorem~\ref{thm1} it is shown that if $\#S=1$, and there exists a nondegenerate (not necessarily induced) cycle of $\G$ of length $n$, then $n\equiv 2 \pmod{p-1}$. 
\end{remark}

Before proving Theorem~\ref{thm3} we state an auxiliary result from the theory of Diophantine equations.
Consider the $S$-unit equation
\begin{equation}
\label{eq5.1}
ax+by=1\ \ \ \text{in}\ x,y\in \Z_S^*,
\end{equation}
where $a$ and $b$ are nonzero rationals. The following result is due to Evertse~\cite{ev}.

\begin{proposition}\label{ev}
The number of solutions of \eqref{eq5.1} is at most
\begin{equation}
\label{neweq5}
3\cdot 7^{2|S|+3}.
\end{equation}
\end{proposition}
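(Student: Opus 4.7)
The plan is to mimic Evertse's proof of this bound, which combines archimedean and $p$-adic approximation with a careful pattern-counting argument. First I would normalize the equation. After a change of variables $u=ax$, $v=by$, the equation $ax+by=1$ with $(x,y)\in(\Z_S^*)^2$ becomes a standard two-variable $S$-unit equation $u+v=1$, with $u,v$ lying in a finitely generated subgroup $\Gamma\subset\Q^*$ whose rank modulo torsion is controlled by $|S|$ together with the finitely many extra primes appearing in $a$ and $b$. Each original solution corresponds to at most one solution in this normalized setting, so it suffices to bound the latter.

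Second, to each solution I would attach a ``place signature'' recording, at every place in $S\cup\{\infty\}$, whether $u$ or $v$ dominates in $p$-adic valuation (or in archimedean absolute value), together with a coarse log-scale bin for the dominant quantity. A direct counting argument shows that the number of such signatures is at most $7^{|S|+1}$ or thereabouts; the base $7$ comes from partitioning the places according to seven mutually exhaustive size-comparison cases between $u$, $v$ and $1$, as in Evertse's original paper.

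Third, within a fixed signature class, I would establish a gap principle: two distinct solutions $(u_1,v_1)$ and $(u_2,v_2)$ with the same signature give rise to a quotient $u_1/u_2\in\Z_S^*$ that is very close to $1$ at the dominant place and bounded at the others. The product formula then forces this quotient to have logarithmic height bounded below by a fixed fraction of the maximum height of the two solutions, and iterating shows that each signature class contains only an absolutely bounded number of solutions. Combining the per-class bound with the pattern count yields the total estimate $3\cdot 7^{2|S|+3}$.

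The main obstacle is extracting the explicit numerical constant: qualitative finiteness of the solution set is essentially immediate from the Schlickewei--Schmidt subspace theorem, but obtaining the sharp bound $3\cdot 7^{2|S|+3}$ requires Evertse's delicate $p$-adic Pad\'e approximation machinery together with his optimized pattern enumeration. I would not attempt to improve on that bookkeeping in this sketch, and in practice one quotes Evertse's theorem as a black box, exactly as the authors do.
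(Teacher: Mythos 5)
Your proposal ends where the paper itself begins and ends: the authors give no proof of this proposition, they simply attribute it to Evertse's 1984 Inventiones paper, which is exactly what you conclude one should do. Your sketch of Evertse's normalization, place-signature counting, and gap/Pad\'e-approximation argument is a reasonable outline of the cited proof, but since you (correctly) do not attempt the explicit bookkeeping that produces $3\cdot 7^{2|S|+3}$, your treatment is in substance identical to the paper's: quote Evertse as a black box.
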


The finiteness of the number of solutions of the equation \eqref{eq5.1} can easily be derived from a paper of Mahler, ~\cite{mah}, an effective version of it from a paper of Coates, ~\cite{coat}.

Next we introduce two more notions concerning subgraphs of $\G$. Let graph $G=(V, E)$ be a subgraph of $\G$. Then to each $i\in V$ we can assign a rational value $q_i$, so that $q_i\neq q_j$ if $i\neq j$, and so that if there exists an edge $(i, j)\in E$, then $q_i-q_j$ is an $S$-unit. Then the graph with vertex set $\{q_i:i\in V\}\subseteq \Q$ and edge set defined by connecting $q_i$ and $q_j$ if $(i, j)\in E$, is a {\it representation} of $G$ in $\G$. If $G$ is an induced subgraph of $\G$, then by definition there exists a representation of $G$ in $\G$ with vertex set $\{q_i:i\in V\}\subseteq \Q$ and edges between vertices $q_i$ and $q_j$ if and only if $q_i-q_j$ is an $S$-unit. We say that such a representation of $G$ is an {\it induced representation} of $G$ in $\G$.

\begin{remark}\label{remark}
Note that the set of all representations of subgraph $G$ of $\G$ consists of equivalence classes, where inside each equivalence class any representation of $G$ can be obtained from any other by adding some fixed rational to the values of the vertices and then multiplying these new values by some fixed $S$-unit. From an equivalence class we can therefore choose a unique representative by fixing the value of one vertex to be $0$ and the value of some other vertex, which is connected to this vertex by an edge, to be $1$. 
\end{remark}

\begin{proof}[Proof of Theorem \ref{thm3}]
Let $S$ be a fixed finite nonempty set of primes and $\mathcal{G}$ the $S$-unit graph. By Proposition~\ref{ev} the equation 
\begin{equation} \label{eqev}
1+x=y+z
\end{equation}
in $S$-units $x,y,z$ has only finitely many nondegenerate solutions (a solution $(x,y,z)$ is degenerate if it is of type $(x,1,x)$ or $(x, x,1)$ or $(-1, y, -y)$, and nondegenerate otherwise). We distinguish two cases: 
\begin{enumerate}\label{cases}
\item[(1)] Equation (\ref{eqev}) has no nondegenerate solutions, \label{item1}
\item[(2)] Equation (\ref{eqev}) has nondegenerate solutions.  \label{item2}
\end{enumerate}
First we resolve Case~(1). Choose $S$-units $a,b$ such that $c_0+c_1 a+c_2 b\in \Z_S^*$ with $c_0,c_1,c_2 \in \{-1,0,1\}$, implies 
\[
(c_0, c_1, c_2)\in \{(1, 0, 0), (-1, 0, 0), (0,1, 0), (0, -1, 0), (0, 0, 1), (0, 0, -1)\}.
\]
This is possible since, by Proposition~\ref{ev}, for given $c_0,c_1,c_2\in \Q$ with $c_0c_1c_2 \not= 0$, the equation $c_1x+c_2y=c_0$ has only finitely many solutions in $S$-units $x,y$. Consider graph $G$ with the vertex set
\[
V=\left \{v_0, v_1, v_a, v_b, v_{1+a}, v_{1+b}, v_{a+b}, v_{1+a+b}\right\},
\]
and edge set defined in the following way: connect $v_i, v_j\in V$ by an edge if their subscripts $i$ and $j$ differ by $1,a$ or $b$. Note that the choice of $a$ and $b$ implies that no other difference of subscripts of vertices in $V$ is an $S$-unit. It follows that $G$ is an induced subgraph of $\G$. Indeed, we may assign the value of vertex $v_i\in V$ to be $i$, and then there is an edge between $v_i$ and $v_j$ if and only if $i-j$ is an $S$-unit. Now omit the edge between $v_{1+a}$ and $v_{1+a+b}$. The resulting graph $G^-$ is clearly a subgraph of $\G$. In what follows, we show that $G^-$ is not an induced subgraph of $\G$. 

Suppose the contrary and consider an induced representation of $G^-$ such that the values of $v_0$ and $v_1$ are 0 and $1$, respectively. By Remark~\ref{remark} such a representation exists. Consider the cycle $v_0 \to v_1 \to v_{1+a} \to v_a\to v_0$ in $G^-$. Note that, as we are in Case (1), from the fact that $v_0$, $v_1$ and $v_a$ have distinct values, it follows that the value of $v_a$ has to be some $S$-unit $u_a$ and the value of $v_{1+a}$ has to be $1+u_a$. Analogously it follows that the values of $v_b$ and $v_{1+b}$ have to be $S$-units $u_b$ and $1+u_b$, respectively. Next we consider cycle $v_b\to v_{1+b}\to v_{1+a+b}\to v_{a+b}\to v_b$ in $G^-$. It follows that $v_{a+b}$ has value $u_a+u_b$ and $v_{1+a+b}$ has value $1+u_a+u_b$. Since the difference of the values of vertices $v_{1+a}$ and $v_{1+a+b}$ is $u_b$, i.e.\@ an $S$-unit, and there is no connecting edge between these two vertices, we have a contradiction. Hence graph $G^-$ is not an induced subgraph of $\G$.

Next we consider Case~(2). Let $(x_i,y_i,z_i)$, $i=1,\ldots, k$, be all nondegenerate solutions in $S$-units of \eqref{eqev}. Let 
\[
R_0=\{0, 1, x_i, y_i,z_i, 1+x_i~|~i=1, 2, \ldots, k\}.
\]
Consider graph $G_0$ with vertex set $V_0= \{v_i~|~i\in R_0 \}$ and edge set defined in the following way: connect $v_i, v_j\in V_0$ by an edge if and only if their subscripts $i$ and $j$ differ by an $S$-unit. Choose $S$-unit $a$ such that $a\notin R_0$ and that $a+v-w \in \Z_S^*$ for $v,w \in R_0$ implies $v=w$.
This is possible since, by Proposition~\ref{ev}, for a given $c \not= 0$ the equation $x+y=c$ has only finitely many solutions in $S$-units $x,y$ (take $c=v-w$ for each possible choice of $v,w \in R_0$). Let $R_a = \{a+i~|~i \in R_0\}$. Note that the choice of $a$ implies that $R_0\cap R_a=\emptyset$. 
Consider graph $G_a$ with vertex set $V_a:=\{v_{a+i}~|~i\in R_0\}$ and graph $G$ with the vertex set $V:=V_0\cup V_a$, both with edges between vertices if and only if their subscripts differ by an $S$-unit.  Now we omit from $G$ the edge between $v_{z_1}$ and $v_{a+z_1}$ and denote the resulting subgraph of $\G$ by $G^-$. Note that $G$ is an induced subgraph of $\G$. Further note that $G_0$ and $G_a$ are isomorphic graphs with an isomorphism $f(v_i)=v_{i+a}$, and are subgraphs of $G^-$. We will show that $G^-$ is not an induced subgraph of $\G$. 

Suppose the contrary and consider values for the vertices in $V$ such that the resulting representation of $G^-$ in $\G$ is induced. Without loss of generality we may assume that in this representation of $G^-$ the value of $v_0$ is $0$ and the value of $v_1$ is $1$, see Remark~\ref{remark}. In what follows, we show that the set of values of vertices in $V_0$ in the considered representation of $G^-$ must be $R_0$. Indeed, let $n_i$ denote the number of solutions of 
\begin{equation}\label{defn_i}
1+x_i=y+z
\end{equation}
in $S$-units $y,z$ for $i = 1,2,...,k$. Without loss of generality we may assume $n_1 \geq n_2 \geq \dots \geq n_k$. Suppose $n_1 = n_2 = \cdots = n_l>n_{l+1}$ for some $l\in \{1, 2, \ldots, k\}$. Since for $i\in \{1, 2, \ldots, l\}$ there exists an edge between $v_1$ and $v_{1+x_i}$, it follows that the value of $v_{1+x_i}$ is $1+c$ for some $S$-unit $c$. Since by \eqref{defn_i} there are $n_1$ paths of length $2$ between $v_0$ and $v_{1+x_i}$, it follows that $1+c=y+z$ must have at least $n_1$ solutions. Note that hence $c=x_i$ for some $i\in \{1, 2, \ldots, l\}$. Hence the value of $v_{1+x_i}$ is in the set $\{1+x_1, 1+x_2, \dots, 1+x_l\}$, so the set of values of vertices in $\{v_{1+x_1}, v_{1+x_2}, \dots, v_{1+x_l}\}$ is $\{1+x_1, 1+x_2, \dots, 1+x_l\}$. Therefrom it follows that the set of values of vertices $\{v_{x_i}, v_{y_i}, v_{z_i}~|~i=1, 2, \ldots, l\}$ is $\{x_i, y_i, z_i~|~i=1, 2, \ldots, l\}$. By considering the set of values of vertices $v_{1+x_i}$,  where $i$'s are such that the number of solutions of \eqref{defn_i} equals to $n_{l+1}$, we find that the set of values of these vertices is uniquely determined as well. By proceeding in this way we find by induction that the set of values of vertices of $G_0$ in the considered induced representation of $G^-$ must be $R_0$.

Hence the values of vertices in $V_a$ in the considered representation of $G^-$ are in $\Q\setminus{R_0}$. Since there exists an edge between $v_0$ and $v_a$ in $G^-$, there exists $S$-unit $b$ such that in this representation of $G^-$ the values of $v_0$ and $v_a$ differ by $b$, i.e.\@ the value of $v_a$ is $b$. Note that the representation of $G_0$ with vertex set $R_0$ contains all nondegenerate cycles of length $4$ of $\G$ which contain an edge between $0$ and $1$. It follows that the cycle $v_0 \to v_1 \to v_{1+a} \to v_a\to v_0$ in $G^-$ is degenerate, since the value of $v_a$ is not in $R_0$. Hence the values of $v_a$ and $v_{1+a}$ differ also by 1. Now, recall that $f:V_0\to V_a$ with $f(v_i)=v_{i+a}$ is an isomorphism of graphs $G_0$ and $G_a$, so by the same argument as above (that the set of values of vertices of $G_0$ in the considered representation of $G^-$ is $R_0$), it follows that the set of values of vertices of $G_a$ is $R_0+b=\{i+b~|~i\in R_0\}$. This implies that in $G^-$ the number of edges between $V_0$ and $V_a$ is at least $N:=\#V_0=\#V_a$. This is a contradiction, since by the choice of $a$, the number of edges in $G$ between $V_0$ and $V_a$ is exactly $N$, and hence the number of such edges in $G^-$ is one less. 
\end{proof}

\section{Further remarks}\label{SecRemarks}
Throughout this section as well, $S$ denotes a fixed finite nonempty set of primes, $\G$ denotes the $S$-unit graph, and $m=\gcd \{p-1 : p\in S\}$.

We have proved that the Conjecture~\ref{conj1} does not hold when $\#S=1$, however we believe that the following holds.
\begin{conjecture}\label{NewConj}
Let $S$ be a given finite set of primes with $\#S\geq 2$. There are nondegenerate induced cycles of the $S$-unit graph $\G$ of every sufficiently large even length. 
\end{conjecture}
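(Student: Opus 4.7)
The plan is to split the argument according to whether $2 \in S$. If $2 \in S$, then $m = 1$, and the corollary to Theorem~\ref{thm2} already supplies nondegenerate induced cycles of every sufficiently large length, so in particular every sufficiently large even length is covered. The bulk of the work therefore addresses the case $2 \notin S$, in which every prime $p \in S$ is odd and so $m$ is even. Since the corollary to Theorem~\ref{thm2} already handles $n \equiv 2 \pmod m$, it remains to construct, for each other even residue $r \in \{0, 2, \ldots, m - 2\} \setminus \{2\}$, nondegenerate induced cycles of all sufficiently large lengths $n \equiv r \pmod m$.

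To reach these other residues, I would build \emph{hybrid} cycles using both positive and negative $S$-units. Fix a positive rational $M$ that is not an $S$-unit (for example $M = p - 1$ for some $p \in S$, or a small positive integer having a prime factor outside $S$), and seek positive $S$-units $u_1, \ldots, u_{k_+}$ and $v_1, \ldots, v_{k_-}$ with $\sum u_i = \sum v_j = M$; arranging them as
\[
a \to a + u_1 \to \cdots \to a + M \to a + M - v_1 \to \cdots \to a
\]
gives a cycle of length $n = k_+ + k_-$ whose edge-labels sum to $0$. Since every positive $S$-integer is $\equiv 1 \pmod m$, clearing denominators yields $k_+ \equiv k_- \pmod m$, with common residue determined by $M$; hence $n \equiv 2 k_+ \pmod m$, and as $M$ varies, $2 k_+ \pmod m$ runs through every even residue modulo $m$, so every target $r$ can be reached. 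The existence of positive $S$-unit decompositions of $M$ into $k$ summands for all sufficiently large $k$ in the correct residue class is a mild extension of Proposition~\ref{Lemma}, obtained by starting from an ad-hoc base decomposition and repeatedly applying the substitution $u \mapsto u/p + \cdots + u/p$ ($p$ copies), which enlarges the number of summands by $p - 1 \equiv 0 \pmod m$. The induced condition \emph{within} each half is then secured by choosing the two halves lex-maximally, as in the proof of Theorem~\ref{thm2}; the assumption $M \notin \Z_S^*$ forces the full half-sum $u_1 + \cdots + u_{k_+} = M$ outside $\Z_S^*$, which the induced condition requires whenever $k_+, k_- \geq 2$.

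The principal obstacle is controlling the \emph{crossing} partial sums $u_i + \cdots + u_{k_+} - v_1 - \cdots - v_t$, which must also avoid $\Z_S^*$ except for the two coinciding with edges of the cycle. The lex-maximal argument of Theorem~\ref{thm2} governs only consecutive sums lying inside a single half, so it does not see the interaction between the two halves and a new ingredient is needed. One natural plan is to exploit the ample freedom in the $v_j$'s once the $u_i$'s are fixed: for each forbidden crossing value the set of sequences $(v_j)$ realising it is cut out by an $S$-unit relation of the type controlled by Propositions~\ref{Evertse} and~\ref{ev}, so a counting argument comparing the total number of admissible sequences to the number of bad ones should produce a valid choice. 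Another plan would be to place the $u_i$'s and $v_j$'s in essentially disjoint multiplicative structures (for instance $u_i$ built from a single prime $p$ and $v_j$ from a different prime $q$), so that crossing differences acquire mixed $p$- and $q$-adic valuations incompatible with being $S$-units. Implementing either of these strategies rigorously---turning the heuristic ``generic decompositions avoid thin arithmetic sets'' into a rigorous density estimate---is the technical heart of the conjecture and, I believe, the obstacle that has so far kept it unresolved.
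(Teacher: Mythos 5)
This statement is a conjecture which the paper itself does not prove: Section~\ref{SecRemarks} establishes only Proposition~\ref{problem} (the existence, for every sufficiently large even $n$, of $n$ $S$-units with zero sum and no proper zero subsum) and then explicitly records that upgrading such a solution to one also satisfying the induced-cycle condition ``remains an open problem.'' Your proposal does not close that gap either, and you say so yourself: the control of the crossing partial sums $u_i+\cdots+u_{k_+}-v_1-\cdots-v_t$ is precisely the missing induced-cycle condition, and neither of your two suggested devices is carried out. The counting idea is not obviously salvageable: the lexicographic argument of Theorem~\ref{thm2} produces essentially one distinguished decomposition rather than a large family to average over, and Proposition~\ref{Evertse} gives an upper bound on the number of nondegenerate solutions of a fixed $S$-unit equation but no lower bound on the number of ``good'' decompositions to set against the ``bad'' ones. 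The separation idea fails at the seam: both half-sums equal $M$, so crossing sums necessarily mix the two alphabets, and single-prime valuation arguments do not control them; indeed Theorem~\ref{thm1} shows the conclusion is genuinely false for $\#S=1$, so any proof must use two primes interactively rather than disjointly.

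There is also a second, unacknowledged gap: nondegeneracy of your hybrid cycle. With edge labels $u_1,\ldots,u_{k_+},-v_1,\ldots,-v_{k_-}$, a proper zero subsum exists as soon as some subset of the $u_i$'s has the same sum as some subset of the $v_j$'s (for instance if any $u_i$ equals any $v_j$), and nothing in your construction excludes this. The paper's partial result handles exactly this point by a different construction: Kronecker's theorem supplies exponents with $rq^{\beta_r}>p^{\alpha_r}>(r-1)q^{\beta_r}$, one writes $r$ copies of $q^{\beta_r}$ as $p^{\alpha_r}$ plus $1$'s and glues on a decomposition of $1$ from Theorem~\ref{thm2}, and the two inequalities are precisely what forbid proper zero subsums. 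Your residue bookkeeping ($n\equiv 2M\pmod m$ sweeping out all even residues as $M$ varies) is sound and parallels the paper's observation that $s_r\equiv 2r\pmod m$, but as a proof of the conjecture the proposal stops where the paper does, at the same open problem.
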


If $2\in S$ or $3\in S$ the conjecture is true by Ruzsa's results. Moreover, in these cases there are nondegenerate induced cycles of $\mathcal{G}$ of every even length. 

Proving Conjecture~\ref{NewConj} is equivalent to showing that for $S$ with $\#S\geq 2$ and every sufficiently large even $n\in \N$ there exist $S$-units $u_1, u_2, \ldots, u_n$ with zero sum, such that for all $l, i_1, i_2, \ldots, i_k\in \N$ with $0<l<n$ and $1\leq i_1<\cdots<i_l\leq n$ we have
\begin{equation}\label{units2}
\nonumber u_{i_1}+u_{i_2}+\cdots+u_{i_l}\neq 0,
\end{equation}
i.e.\@ the condition on nondegeneracy is satisfied,
and that for all $1\leq i<j\leq n, \  (i, j)\neq (1,n-1), (2, n)$ we have
\[
u_{i}+u_{i+1}+\cdots+u_{j}\notin \Z_S^*,
\]
i.e.\@ the condition on induced cycles is satisfied. It is reasonable to attempt to prove this by splitting the proof into two steps, as it was done in the proof of Theorem~\ref{thm2}. To that end we prove the following proposition which corresponds to Proposition~\ref{Lemma}. To prove the proposition we will use the following well-known fact. For irrational $r$ we have that
\begin{equation}\label{fractional}
\left\{\{nr\}:n\in \N\right\} \ \textnormal{is dense in}\ [0, 1),
\end{equation}
where $\{nr\}$ denotes the fractional part of $nr$. There is a more general result than what is stated above, known as Kronecker's theorem. It can be found in \cite[Chap.\@ 23]{HW}.

\begin{proposition}\label{problem}
Given a finite set of primes $S$ such that $\#S\geq 2$, for every sufficiently large even $n\in \N$ there exist $S$-units $u_1, u_2, \ldots, u_n$ such that 
\begin{equation}\label{sum2}
u_1+u_2+\cdots+u_n=0
\end{equation}
and such that for all $l, i_1, i_2, \ldots, i_k\in \N$ with $0<l<n$ and $1\leq i_1<\cdots<i_l\leq n$ we have
 \begin{equation}
u_{i_1}+u_{i_2}+\cdots+u_{i_l}\neq 0.
\end{equation}
\end{proposition}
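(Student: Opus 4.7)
My plan is to mirror the strategy of Proposition~\ref{Lemma}: establish base cases, then propagate by a closure operation that adds $p-1$ terms to an existing configuration, eventually covering all sufficiently large even $n$. Call $n\in \N$ \emph{zero-admissible} if there exist $S$-units $u_1,\ldots,u_n$ with $\sum u_i=0$ and no proper zero subsum; the goal is that every sufficiently large even $n$ is zero-admissible.

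For base cases I would first use, for each $p\in S$, the identity
\[
\underbrace{\tfrac{1}{p}+\cdots+\tfrac{1}{p}}_{p}-1=0
\]
which shows $n=p+1$ is zero-admissible: any proper subsum has the form $j/p$ or $j/p-1$ with $0\le j\le p$, and vanishes only for the empty or full subset. This covers the residue $n\equiv 2\pmod{m}$. Since $\#S\ge 2$, for other even residues modulo $m$ I would construct configurations like $p+p-q-(2p-q)=0$ whenever $2p-q$ is an $S$-unit (giving $n=4$), and more generally identities of the form $\sum a_i p_i -\sum b_j q_j=0$ for primes in $S$, adjusted to land in any prescribed even residue class. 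Solvability of enough such identities follows from elementary congruences together with the flexibility provided by $\#S\ge 2$.

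For closure I would show: if $(u_1,\ldots,u_n)$ is zero-admissible and some index $i$ is \emph{safe}, meaning $k u_i+p\sum_{j\in J}u_j\neq 0$ for every $0<k<p$ and every $J\subseteq \{1,\ldots,n\}\setminus\{i\}$, then replacing $u_i$ by $p$ copies of $u_i/p$ gives a zero-admissible configuration of length $n+(p-1)$. Zero-sumness is preserved, and the only new potential zero subsums mix $k$ of the new terms with some $J$, giving the equation $ku_i+p\sum_J u_j=0$, which safety precisely forbids. Iterating with varying $p\in S$ and starting from base cases in each admissible residue class modulo $m$, a numerical-semigroup argument exactly as in Proposition~\ref{Lemma} yields all sufficiently large even $n$.

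The main obstacle will be guaranteeing the existence of a safe index inside some zero-admissible configuration of each starting length. Here I rely on the density statement recalled just before the proposition: since $\#S\ge 2$, for distinct $p,q\in S$ the ratio $\log p/\log q$ is irrational, so the set of $S$-units is dense in $\R^+$. I plan to use this to perturb entries by pair-swaps $(u_i,u_j)\mapsto (u_i+\delta,u_j-\delta)$ with $\delta$ an $S$-unit chosen so that both new entries remain $S$-units while avoiding the finitely many forbidden rationals $-\tfrac{p}{k}\sum_J u_j$ that would spoil safety. The subtle point is that Evertse's Proposition~\ref{Evertse} restricts the available $\delta$, so one may need to iterate perturbations or construct base configurations that are already sufficiently generic; arranging this flexibility is precisely the obstruction that fails when $\#S=1$ (as reflected in Theorem~\ref{thm1}).
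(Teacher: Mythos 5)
Your proposal has two unresolved gaps, and both sit exactly where the paper's actual proof does its real work. First, the base cases: you need a nondegenerate zero-sum configuration in \emph{every} even residue class modulo $m$, but the only class you actually construct is $n\equiv 2\pmod m$ (via $p$ copies of $1/p$ minus $1$). The suggested identity $p+p-q-(2p-q)=0$ requires $2p-q$ to be an $S$-unit, which generically fails: for $S=\{11,31\}$ one has $m=10$, so five even classes must be covered, yet neither $2\cdot 11-31=-9$ nor $2\cdot 31-11=51$ is an $S$-unit. The appeal to ``elementary congruences together with the flexibility provided by $\#S\ge 2$'' restates the problem rather than solving it. The paper's proof is built precisely around this point: for each $r\le m/2$ it uses the equidistribution of $\{\alpha\log p/\log q\}$ to produce exponents with $rq^{\beta_r}>p^{\alpha_r}>(r-1)q^{\beta_r}$, giving the identity $q^{\beta_r}+\cdots+q^{\beta_r}=p^{\alpha_r}+1+\cdots+1$ with $s_r\equiv 2r\pmod m$ summands; as $r$ ranges over $1,\dots,m/2$ this hits every even class, and the two size inequalities are exactly what rule out proper zero subsums.

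Second, the closure step: the existence of a ``safe'' index is never established, and the proposed fix cannot work as stated. A pair-swap $(u_i,u_j)\mapsto(u_i+\delta,u_j-\delta)$ with all entries remaining $S$-units is governed by a two-term $S$-unit equation, so by Proposition~\ref{ev} only finitely many $\delta$ are available; there is no reason any of them avoids the forbidden values, and density of the $S$-units in $\R^{+}$ is irrelevant because the perturbed entries must themselves be $S$-units. You flag this as the main obstacle, but it is not a removable technicality --- it is the core difficulty, and the paper avoids it entirely by dispensing with any inductive lengthening. Instead it splices a long positive representation $v_1+\cdots+v_{\ell m+1}=1$ (supplied by Theorem~\ref{thm2}) in place of one of the $1$'s in the identity above; positivity of the $v_i$'s together with the inequalities $p^{\alpha_r}>(r-1)q^{\beta_r}$ and $q^{\beta_r}>rq^{\beta_r}-p^{\alpha_r}$ then yields nondegeneracy of the whole configuration in one stroke for every length $n=s_r+\ell m$. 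You correctly identified the right tool (irrationality of $\log p/\log q$ and Kronecker's theorem) but deployed it for an unworkable perturbation argument rather than for the exponent inequalities that make the construction go through.
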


\begin{proof}

If $2\in S$, the statement (in fact even Conjecture~\ref{NewConj}) follows from the aforementioned result of Ruzsa. Assume henceforth that $2\notin S$. Let $p,q\in S$ be such that $p>q$, and let $m$ be as in \eqref{defm}. From Theorem~\ref{thm2} 
it follows that there exists $\ell_0\in \N$ such that for every $\ell\in \N$ with $\ell\geq \ell_0$ there exist positive $S$-units $v_1,v_2\dots,v_{\ell m+1}$ with
\begin{equation}\label{nulla}
v_1+v_2+\dots+v_{\ell m+1}=1.
\end{equation}
In what follows we show that for every $r\in \N$ with $1\leq r\leq m/2$ there exist exponents $\alpha_r, \beta_r\in \N$ such that
\begin{equation}\label{egy}
rq^{\beta_r}>p^{\alpha_r}>(r-1)q^{\beta_r}.
\end{equation}
If $r=1$, required $\alpha_1, \beta_1$ clearly exist. Assume henceforth $r>1$, and observe that \eqref{egy} is equivalent to
\begin{equation}
\label{ketto}
{\frac{\log r}{\log q}}>\alpha_r{\frac{\log p}{\log q}}-\beta_r> {\frac{\log (r-1)}{\log q}}.
\end{equation}
Further note that $\log p/\log q$ is irrational, and that clearly
\[
1>{\frac{\log r}{\log q}}>{\frac{\log (r-1)}{\log q}}\geq 0.
\]
Then by \eqref{fractional} there exists $\alpha_r\in \N$ such that 
\begin{align*}
{\frac{\log r}{\log q}}>
\left\{\alpha_r{\frac{\log p}{\log q}}\right\}
>{\frac{\log (r-1)}{\log q}}.
\end{align*}
Let $\beta_r$ be the integer part of $\alpha_r\log p/\log q$. Note that the assumption $p>q$ implies $\beta_ r\in \N$. Hence the assertion \eqref{ketto}, and consequently \eqref{egy}, follows.

Now write
\begin{equation}\label{harom}
q^{\beta_r}+\dots+q^{\beta_r}=p^{\alpha_r}+1+\dots+1,
\end{equation}
where the number of $q^{\beta_r}$'s on the left hand side is $r$, and the number of $1$'s on the right hand side is $r q^{\beta_r}-p^{\alpha_r}$. Note that by $rq^{\beta_r}-p^{\alpha_r}>0$ from \eqref{egy} we get that there is at least one appearance of $1$ on the right hand side. Further note that in total we have
\begin{equation}\label{s_r}
s_r:=r+1+(r q^{\beta_r}-p^{\alpha_r})
\end{equation}
summands in \eqref{harom}. Let $n$ be an even positive integer such that
\begin{equation}\label{defn}
n\geq \ell_0m+\max\limits_{1\leq r\leq m/2}s_r.
\end{equation}
Recall that $p\equiv q\equiv 1\pmod{m}$ and note that from \eqref{s_r} it follows that $s_r\equiv 2r\pmod{m}$. So $n\equiv s_r \pmod{m}$ for some $r\in \{1,\dots,m/2\}$. Hence $n=s_r+\ell m$ for some $\ell\in \N$, where $r\in \{1, \ldots, m/2\}$ and $\ell\geq \ell_0$ by \eqref{defn}. By \eqref{nulla} and \eqref{harom} there exist positive $S$-units $v_1, \ldots, v_{\ell m+1}$ such that
\begin{equation}\label{negy}
p^{\alpha_r}+1+\dots+1+v_1+\dots+ v_{\ell m+1}-q^{\beta_r}-\dots -q^{\beta_r}=0,
\end{equation}
where the number of $1$'s is $rq^{\beta_r}-p^{\alpha_r}-1$, the number of $q^{\beta_r}$'s is $r$, and the $v_i$'s have sum $1$. Note that all the summands in \eqref{negy} are $S$-units, and that their number is
\[
1+(rq^{\beta_r}-p^{\alpha_r}-1)+(\ell m+1)+r=n.
\]
Suppose that in \eqref{negy} we have a proper zero subsum. If $p^{\alpha_r}$ occurs in this subsum, then by $p^{\alpha_r}>(r-1)q^{\beta_r}$ from \eqref{egy} we get that all the $q^{\beta_r}$'s are involved in this zero subsum. This is clearly possible only if the subsum involves all the summands in \eqref{negy}, which is a contradiction. On the other hand, if $p^{\alpha_r}$ does not occur in the subsum, then by $q^{\beta_r}>rq^{\beta_r}-p^{\alpha_r}$ from \eqref{egy} we get a contradiction again. Thus \eqref{negy} has no proper zero subsum, which concludes the proof.
\end{proof}

Note that by Remark~\ref{notind} it follows that the condition $\#S\geq 2$ in Proposition~\ref{problem} is necessary. Now, let $S$ be a finite set of primes with $\# S\geq 2$, and let $n_0$ be a sufficiently large even integer so that there exists a nondegenerate cycle of $\G$ of length $n_0$. Such $n_0$ exists by Proposition~\ref{problem}. To prove Conjecture~\ref{NewConj} one must show that there exists some $n_0'\in \N$ with $n_0'\geq n_0$, such that for every even integer $n$ with $n\geq n_0'$ among all nondegenerate solutions in $S$-units of $u_1+\dots+u_n=0$ there exists a solution $(u_1^*, u_2^*, \ldots, u_n^*)$ such that 
\[
u_{i}^*+u_{i+1}^*+\cdots+u_{j}^*\notin \Z_S^* \ \textnormal{for all}\  1\leq i<j\leq n, \  (i, j)\neq (1,n-1), (2, n).
\]
This remains an open problem.

\subsection*{Acknowledgements}
The first and the third author were supported by the Austrian Science Fund (FWF) W1230-N13 and NAWI Graz. 
The second author was supported in part by the OTKA grants K100339, NK101680 and by the T\'AMOP-4.2.2.C-11/1/KONV-2012-0001 project. The project has been supported by the European Union, co-financed by the European Social Fund.



\begin{thebibliography}{24}


\bibitem {coat} J.\ H.\ Coates, {\it An effective $p$-adic analogue of a theorem of Thue}, Acta Arith. {\bf 15} (1969), 279-305.

\bibitem {E84} J.-H.\ Evertse, {\it On sums of $S$-units and
linear recurrences}, Comp. Math. {\bf 53} (1984), 225--244.

\bibitem {ev} J.-H. Evertse, {\it On equations in $S$-units
and the Thue-Mahler equation}, Invent. Math. {\bf 78}
(1984), 561--584.




\bibitem {G82} K.\ Gy\H{o}ry, {\it On certain graphs associated with an integral domain and their applications to Diophantine problems}, Publ.\ Math.\ Debrecen {\bf 29} (1982), 79--94.

\bibitem {G08} K.\ Gy\H{o}ry, {\it On certain arithmetic graphs and their applications to Diophantine problems}, Func.\ Approx.\ Comment. Math {\bf 39} (2008), 289--314.

\bibitem {ght1} K. Gy\H{o}ry, L. Hajdu and R. Tijdeman, {\it Representation of finite graphs as difference graphs of $S$-units}, I, to appear in J. Combin. Th., Ser. A.

\bibitem {HW} G.\@H.\@ Hardy and E.\@ M.\@ Wright, {\it An introduction to the theory of numbers}, Oxford University Press, Oxford (2008)

\bibitem {mah} K. Mahler, {\it }, Zur Approximation algebraischer Zahlen, I, Math. Ann. {\bf 107} (1933) 691-730.

\bibitem {R11} I.\ Z.\ Ruzsa, {\it The difference graph of $S$-units}, Publ. Math. Debrecen {\bf 79/3-4} (2011), 675--685.

\bibitem {PS82} A.\ J.\ Van der Poorten and H.\ P.\ Schlickewei, {\it The growth condition for recurrence sequences}, Macquarie Math.\ Reports 82--0041, North Ryde (1982).


\end{thebibliography}
\end{document}